\newcommand{\tred}[1]{{\color{red}{#1}}}
\newtheorem{theorem}{Theorem}[]
\newtheorem{lemma}[theorem]{Lemma}
\newtheorem{remark}[theorem]{Remark}
\def \Cm {\mathbb{C}}
\def \Zm {\mathbb{Z}}
\def\O{\mathcal{O}}
\newcommand{\cout}[1]{}
\newcommand{\x}{x}
\newcommand{\dprod}[2]{\left\langle{#1},{#2}\right\rangle}
\newcommand{\zbar}{\overline{z}}
\def \i {\boldsymbol\iota}
\newcommand{\Hksol} { {H_k^\text{sol}(M)} }
\newcommand{\ff}[1] {#1}
\title{On solenoidal-injective and injective ray transforms of tensor fields on surfaces}
\author{Venkateswaran P. Krishnan\thanks{Tata Institute of Fundamental Research, Centre For Applicable Mathematics, Bangalore, India. vkrishnan@math.tifrbng.res.in} \and Rohit Kumar Mishra\thanks{Department of Mathematics, University of California, Santa Cruz, CA 95064, USA. rokmishr@ucsc.edu} \and Fran\c cois Monard\thanks{Department of Mathematics, University of California, Santa Cruz, CA 95064, USA. fmonard@ucsc.edu}}
\begin{document}
\maketitle

\begin{abstract}
	We first give a constructive answer to the attenuated tensor tomography problem on simple surfaces. We then use this result to propose two approaches to produce vector-valued integral transforms which are fully injective over tensor fields. The first approach is by construction of appropriate weights which vary along the geodesic flow, generalizing the moment transforms. The second one is by changing the pairing with the tensor field to generate a collection of transverse ray transforms. 
\end{abstract}

\section{Introduction}\label{sec:introduction}

Let $(M,g)$ be a non-trapping Riemannian surface with strictly convex boundary $\partial M$. Denote its unit tangent bundle by 
\begin{align*}
    SM := \{(x,v)\in TM, \quad g_x(v,v) =1\},
\end{align*}
with inward and outward boundaries 
$$ \partial_{\pm} SM =  \{(x, v) \in SM : x \in \partial M \mbox{ and } \pm \langle v, \nu(x) \rangle \leq 0\}$$ 
where $\nu(x)$ is the outer unit normal to $\partial M$ at $x$. One may then define the geodesic flow $\varphi_t\colon SM\to SM$, with infinitesimal generator $X_{(x,v)}:= \frac{d}{dt}|_{t=0} \varphi_t(x,v)$. Given $F:SM\to \Cm$ and $w:SM\to \Cm$ a non-vanishing weight, the weighted ray transform $I_w:L^2(SM)\to L^2(\partial_+ SM)$ is defined by 
\begin{align*}
    I_{w} F(x,v) = \int_0^{\tau(x,v)} w(\varphi_t(x,v)) F (\varphi_{t}(x,v)) \ dt, \qquad (x,v)\in \partial_+ SM,
\end{align*}
and several problems of integral geometry arise from restricting such transforms to specific types of integrands $F$, for example functions on $M$, vector fields and tensor fields. In particular, for $f$ a symmetric $m$-tensor field, one may define the transform
\[ I_{w,m} f := I_w [\ell_m f], \qquad f\in C^\infty(S^m(T^* M)),  \]
where we identify a tensor field $f$ with its longitudinal restriction $\ell_m f$ to $SM$ defined by
\begin{align}
    \ell_m f (x,v) := f_x(v, \dots, v), \qquad (x,v)\in SM.
    \label{eq:lm}
\end{align}

Examples of such transforms in the literature arise commonly for $w \equiv 1$ (the geodesic X-ray transform), or when 
\[ w(x,v) = w_a(x,v) = \exp\left( -\int_0^{\tau(x,v)} a(\gamma_{x,v}(s))\ ds \right) \]
for some attenuation function $a(x) \in C^\infty(M)$. The associated transform is the attenuated X-ray transform, which we denote $I_a$ instead of $I_w$. With or without attenuation, it is well-known that the transform $I_{a,m}$ has a natural kernel, which increases with $m$, and the {\em tensor tomography problem} asks whether this natural kernel is the only obstruction to injectivity. \ff{Since a natural complement to this kernel is made of solenoidal tensor fields, the injectivity question is reformulated as ``s(olenoidal)-injectivity'', or injectivity over solenoidal tensor fields.} When $I_{a,m}$ is s-injective, further interest is given toward finding a reconstruction method for a representative of $f$ modulo the kernel of $I_{a,m}$ \ff{(in particular, the solenoidal representative may or may not be the most practical choice, as argued for instance in \cite{Monard2015a})}. Similar results and studies exist on higher-dimensional manifolds, closed manifolds\ff{, vector bundles} and for other types of flows, see the recent review article \cite{Ilmavirta2018}.

In the absence of attenuation, the answer has been shown to be positive, most recently on simple surfaces \cite{Paternain2011a}, and earlier on disks with a spherically symmetric metric satisfying Herglotz' non-trapping condition \cite{Sharafudtinov1997}, none of which generalizing the other. \ff{Explicit inversion approaches in the Euclidean case have been proposed in \cite{Kazantsev2004,Derevtsov3,Monard2015a}. The approach in \cite{Kazantsev2004} relies on the potential-solenoidal decomposition of tensors and SVD bases for solenoidal tensor fields; in \cite{Derevtsov3}, a characterization of solenoidal tensors in terms of higher powers of an operator ${\bf d}^\perp$ allows to set up an inversion procedure; finally in \cite{Monard2015a}, the third author relies on another decomposition of tensor fields which is generalized below in Theorem \ref{thm:decomposition}. Approaches were also proposed to reconstruct the singular support of vector fields \cite{Derevtsov2} and tensor fields \cite{Derevtsov4}.} It is conjectured that the answer to the tensor tomography problem is positive for any non-trapping surface with strictly convex boundary. In this direction, injectivity has recently been proved over piecewise constant functions in \cite{Ilmavirta2017}.

In the presence of attenuation, tensor tomography was solved for the cases $m=0,1$ in \cite{Salo2011}, with constructive approaches in \cite{Monard2015,Assylbekov2017}, \ff{some implemented in \cite{Monard2015}, and independent numerical methods in \cite{Derevtsov1}}; the case for general $m$ was studied in \cite{Monard2017a} in the Euclidean case. The latter provides a fully constructive answer, heavily relying on the ability to construct explicit invariant distributions with prescribed average on the fibers of $SM$. In the case of simple surfaces, new recent formulas were provided for functions and vector fields in \cite{Assylbekov2017}, following earlier works in \cite{Salo2011,Monard2015}. A first salient feature of our work is to propose a constructive solution to the attenuated tensor tomography problem on simple surfaces, see Theorem \ref{thm:ATT} below. This builds upon recent inversion formulas derived in \cite{Assylbekov2017} for sums of functions and one-forms. To generalize this to general $m$, we use that a tensor field of arbitrary order admits a gauge representative which differs from a [function, one-form] pair by higher-order, divergence-free, trace-free elements, see Theorem \ref{thm:decomposition} below. It is then enough to first reconstruct these, via appropriate pairing of the data with traces of special invariant distributions, see Section \ref{sec:ATTP}.

The second point of focus of this article is to study vector-valued ray transforms which are fully injective over tensor fields, some coined {\em moment ray transforms}, others {\em transverse ray transforms}. To the authors' knowledge, both were first introduced V.A. Sharafutdinov, the former in \cite{Sharafutdinov1986} and the latter in \cite[Chapter 5]{Sharafudtinov1997}, with applications to polarization tomography, see also \cite{Hammer2004,Palamodov2015,Derevtsov3}. We propose a generalization of both types of transforms to a Riemannian setting, and show that such transforms provide natural prototypes of injective ray transforms over tensor fields, constructively invertible in certain cases, even in the presence of certain weights, see Theorems \ref{thm:moments}, \ref{thm:transverse} and \ref{thm:inj} below. Prior results in this direction cover the case of vector fields \cite{Svetov2013} and we generalize them to tensor fields of arbitrary order. 

We now state the main results and provide an outline of the article below.

\section{Statement of the main results} \label{sec:main}

\subsection{Attenuated tensor tomography on simple surfaces} 

We first provide a positive answer to the attenuated tensor tomography problem on simple surfaces. Recall that a Riemannian surface $(M,g)$ is called {\em simple} if it is non-trapping, has strictly convex boundary, and has no conjugate points. Below, we will exploit Fourier analysis on the tangent circles, indexed by integer frequencies, and a function on $SM$ is said to have degree $m$ if it is supported on the harmonics $k$ such that $|k|\le m$. 

\begin{theorem}\label{thm:ATT}
    Let $(M,g)$ a simple Riemannian surface with boundary and $a\in C^\infty(M,\Cm)$. Suppose $f$ is a function of degree $m$ on $SM$ such that $I_a f = 0$. Then there exists $p$ of degree $m-1$ with $p|_{\partial SM} =0$ such that $f = Xp + ap$.
\end{theorem}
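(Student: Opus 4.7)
First I would construct the candidate primitive $p$ as the unique smooth solution on $SM$ of the attenuated transport equation $(X+a)p = f$ with $p|_{\partial_- SM} = 0$, obtained by integration along the geodesic flow with integrating factor $e^{\int_0^t a(\gamma(s))\,ds}$. The hypothesis $I_a f = 0$ (possibly up to conjugation by a nonvanishing attenuation factor to match the paper's convention for $w_a$) translates directly into $p|_{\partial_+ SM} = 0$, so $p$ vanishes on all of $\partial SM$; standard simple-surface regularity gives $p \in C^\infty(SM)$. The real content of the theorem is then the statement that this smooth $p$ has fiberwise degree $m-1$, i.e., its Fourier harmonics $p_k$ vanish for $|k| \ge m$.

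For this I would use the gauge decomposition of Theorem \ref{thm:decomposition} to rewrite $f = (X+a)q + \tilde f$, where $q$ vanishes on $\partial SM$ and has fiberwise degree at most $m-1$, while $\tilde f$ is supported on the top harmonics of degree $\pm m$ subject to the trace-free/divergence-free constraints coming from the decomposition. Linearity of $I_a$ then gives $I_a \tilde f = 0$, so the problem reduces to showing $\tilde f = 0$. For this I would invoke the Salo--Uhlmann construction of smooth attenuated holomorphic integrating factors on a simple surface: for any prescribed smooth degree-$k$ coefficient $h_k$ and any $k \ge 0$, there exists a smooth function $h$ on $SM$ with $(-X+a)h = 0$ whose Fourier expansion starts at degree $k$ with leading coefficient $h_k$. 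Pairing $h$ against $f$ and integrating by parts using $(X+a)p = f$ together with $p|_{\partial SM} = 0$ and $(-X+a)h = 0$ yields $\int_{SM} h\,f\,d\mu = 0$; specialized to $k = m$ and letting $h_m$ range over smooth degree-$m$ functions, this orthogonality forces the top harmonics of $\tilde f$ to vanish. Uniqueness for the attenuated transport equation with zero boundary data then gives $p = q$, which has the desired degree $m-1$.

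The hardest step will be the construction and use of these attenuated invariant distributions with prescribed leading Fourier mode of arbitrary degree $\ge m$. This is precisely where the simplicity hypothesis enters, through smooth holomorphic integrating factors for $-X+a$, generalizing the constructions underlying Assylbekov--Monard's formulas for the $m \le 1$ cases. A secondary technical point will be ensuring that the decomposition of Theorem \ref{thm:decomposition} interacts cleanly with the attenuation term and with the boundary-vanishing of $q$, so that the reduction really yields $I_a \tilde f = 0$ and leaves $\tilde f$ subject to the trace-free/divergence-free constraints that are used to conclude.
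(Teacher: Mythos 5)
Your strategy is the paper's first proof in all essentials: decompose $f$ via Theorem \ref{thm:decomposition}, then kill the residual by pairing the transport equation against attenuated first integrals whose Fourier expansion starts at a prescribed degree (the paper's Lemma \ref{lem:phi}, built from the Salo--Uhlmann holomorphic integrating factors together with the Paternain--Salo--Uhlmann first integrals with prescribed harmonic), and integrate by parts. Two points in your write-up, however, do not survive as literally stated. First, the residual $\tilde f$ produced by Theorem \ref{thm:decomposition} is \emph{not} supported in the top harmonics $\pm m$: it has the form $h_0 + X_\perp h_\perp + \sum_{k=1}^m h_k$ and therefore has components in every degree from $0$ to $m$. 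Killing the degree-$m$ part is only the first step of a downward iteration over $k = m, m-1, \dots, 2$, and the remaining piece $h_0 + X_\perp h_\perp + h_1$ is exactly the $[0,1]$-tensor case, which is not reached by this pairing and must be quoted from Salo--Uhlmann / Assylbekov--Monard--Uhlmann; as written, your argument stops after one step and would only conclude if $\tilde f$ really were concentrated in degree $\pm m$.

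Second, you cannot let the leading coefficient ``range over smooth degree-$m$ functions.'' If $\psi = \phi + \mathcal{O}_{\geq (m+1)}$ solves the adjoint transport equation, then inspecting the degree-$(m-1)$ Fourier component of that equation forces $\eta_-\phi = 0$; so only leading coefficients in $\ker^m\eta_-$ (respectively $\ker^{-m}\eta_+$ for the antiholomorphic construction) are achievable. Consequently the orthogonality relation you derive annihilates only the projection of the top harmonic onto $H_m^{\text{sol}}(M) = L^2(\ker^m\eta_-)\oplus L^2(\ker^{-m}\eta_+)$ --- which suffices precisely because Theorem \ref{thm:decomposition} guarantees that $h_m$ lies in that space. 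Your closing remark about the trace-free/divergence-free constraints shows you sense this, but it is the reason the argument closes rather than a secondary technical point, and it needs to be said. With these two repairs (the full downward iteration plus the reduction to the known low-degree case, and the restriction of admissible leading coefficients matched against the structure of the residual), your proposal coincides with the proof given in Section \ref{sec:ATTP}.
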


The interpretation in the language of tensor fields is as follows. We recall that for a tensor field $h$ of degree $m$, the function $\ell_m h$ on $SM$ is supported in the harmonics $-m, -m+2, \dots, m-2, m$. Since the presence of attenuation mixes even and odd Fourier modes, the result is most naturally stated in terms of pairs of tensor fields, in the sense that if $(f_m,f_{m-1})$ is a pair of two tensor fields of consecutive orders and $I_a(f_{m-1}+f_m) = 0$, then there exists a tensor field $p$ of order $m-1$ vanishing at $\partial M$ such that $f_m = d^s p$ and $f_{m-1} = ap$, with $d^s$ the symmetrized covariant derivative. 

Theorem \ref{thm:ATT} is based on the decomposition theorem below, which shows that a general $m$-tensor $f$ always differs from a ``potential tensor'' $Xp+ap$ by a unique representative over which the attenuated ray transform is injective and explicitly invertible. In the statement below, the space $W_0^{1,2}(M)$ is the standard Sobolev space, and the notation $\Hksol$ corresponds to square-integrable, trace-free, divergence-free $k$-tensors, see Sec. \ref{sec:prelims}. Below, by a function of ``degree $m$'' on $SM$, we mean a function in $\bigoplus_{k=0}^m \Omega_k$, see Sec. \ref{sec:prelims}.

\begin{theorem}\label{thm:decomposition}
    Let $(M,g)$ be a simply connected Riemannian surface and let $f\in L^2(S^m(T^*M))$. Then $\ell_m f$ decomposes uniquely as 
    \begin{align*}
	\ell_m f = (X+a)p + h,
    \end{align*}
    where $p$ is of degree $m-1$ with components in $W^{1,2}_0(M)$ and where 
    \begin{align}
	h = h_0 + X_\perp h_\perp + \sum_{k=1}^m h_k	
	\label{eq:hform}
    \end{align}
    with $h_0\in L^2(M)$, $h_\perp \in W^{1,2}_0(M)$ and for $k\ge 1$, $h_k \in \Hksol$. If $f\in C^\infty(S^m(T^* M))$, then $p$, $h_0$, $h_\perp$, $h_k$ are all smooth. 
\end{theorem}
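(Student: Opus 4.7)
My approach would be to work in the fiberwise Fourier decomposition $L^2(SM) = \bigoplus_k \Omega_k$ with the splitting $X = \eta_+ + \eta_-$, where $\eta_\pm$ raise/lower the Fourier degree by one. The key analytic input, available on any simply connected surface with boundary, is a Helmholtz-type decomposition at each Fourier level: for every $k \ge 0$, every element of $\Omega_{k+1}$ is the unique sum of $\eta_+ q$ (with $q \in \Omega_k$ satisfying Dirichlet boundary conditions) and an element of $\ker\eta_-|_{\Omega_{k+1}}$, and $\eta_+$ is injective on the Dirichlet subspace. The kernel of $\eta_-$ in $\Omega_{k+1}$ is precisely the positive-mode realization of trace-free solenoidal $(k+1)$-tensors, and the symmetric statement holds for negative modes with $\eta_\pm$ swapped. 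These are solvability statements for first-order $\bar{\partial}$-type operators on a simply connected Riemann surface with boundary, and constitute the analytic core justified by simple-connectedness.

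Granted this, I would prove the decomposition by strong induction on $m$, first strengthening the statement to apply to any $u \in L^2(SM)$ with Fourier support in $\{|k| \le m\}$ (possibly of mixed parity), since $a$ couples parities at each step. The base $m = 0$ is trivial (take $p = 0$, $h_0 = u$). For the inductive step, given $u$ of degree $\le m$, extract the extreme modes $u_{\pm m}$ and apply the Helmholtz decomposition to write
\begin{align*}
    u_m = \eta_+ q^+ + h_m^+, \qquad u_{-m} = \eta_- q^- + h_m^-,
\end{align*}
with $q^\pm$ having Dirichlet boundary data and $h_m^\pm$ in the respective solenoidal kernels; the pair $(h_m^+, h_m^-)$ identifies via $\ell_m$ with an element $h_m \in \Hksol$. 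Setting $q := q^+ + q^-$, the residue $u' := u - (X+a)q - h_m$ has vanishing mode-$\pm m$ components by construction, hence degree $\le m-1$, so one applies the inductive hypothesis to $u'$. The terminal pieces $h_0$, $h_\perp$, $h_1$ arise at the bottom of the recursion from the standard Hodge decomposition of scalars and $1$-forms on the simply connected base.

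Uniqueness propagates mode-by-mode downward: if $(X+a)p + h = 0$, then at mode $m$ one has $\eta_+ p_{m-1} = -h_m^+$; the right side lies in $\ker\eta_-$ while the left lies in the complementary image, so both vanish, and injectivity of $\eta_+$ on the Dirichlet subspace forces $p_{m-1} = 0$. The symmetric argument handles $p_{-(m-1)}$ and $h_m^-$, and iterating through descending modes exhausts all unknowns. Smoothness for smooth $f$ follows from elliptic regularity for each $\eta_\pm$ problem solved along the way.

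The principal obstacle is establishing the Helmholtz-type splitting uniformly at every Fourier level. Viewed as $\bar{\partial}$/$\partial$-type operators on sections of appropriate power bundles induced by the complex structure, the $\eta_\pm$ maps fall under classical solvability results for Cauchy–Riemann equations with vanishing boundary data on simply connected Riemann surfaces; but adapting these uniformly across all Fourier levels, and tracking how lower-order contributions from $\eta_-$ and $a$ propagate through the induction, constitute the bulk of the technical work. Special care is needed at modes $0$ and $\pm 1$, where the structural separation among $h_0$, $X_\perp h_\perp$, and $h_1$ relies on the Hodge decomposition of functions and $1$-forms with appropriate boundary conditions.
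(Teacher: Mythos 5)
Your proposal follows essentially the same route as the paper's proof: the unique elliptic decompositions $f_k = \eta_+ g_{k-1} + h_k$ with $h_k \in \ker\eta_-$ (and the mirror statement for negative modes) at each Fourier level, combined with an induction on the degree that peels off the extreme modes $\pm m$ and absorbs the leftover terms $\eta_\mp q^\pm$ and $a q$ into a lower-degree residue to which the inductive hypothesis applies. The paper merely packages the top-mode step as a standalone lemma and treats the elliptic decompositions as known input, so your argument (including the mode-by-mode uniqueness sketch, which the paper leaves implicit) matches it in substance.
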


In light of Theorem \ref{thm:decomposition}, the proof of Theorem \ref{thm:ATT} consists in proving that the transform $I_a$ is injective over integrands of the form \eqref{eq:hform}. 

Such reconstruction approaches provide the building blocks for invertibility and inversion of the ray transforms considered below.

\subsection{Moment transforms} 

As mentioned above for $m\ge 1$, the problem of reconstructing $f\in S^m$ from $If$ is non-injective. A first approach to recover injectivity is to consider ray transforms involving {\em higher moments} along each geodesic. This was previously tackled by Sharafutdinov in the Euclidean case in \cite{Sharafudtinov1994}, see also \cite{Abhishek2017}. In particular, if $\tau(\x,v)$ denotes the first time at which the geodesic emanating from $(x,v)$ hits the boundary, for any $k\ge 0$, we define the $k$-th moment ray transform of $f$ as
\begin{align*}
  I_{[k]} F(\x,v) = \int_0^{\tau(\x,v)} (\tau(\x,v) - t)^k F(\varphi_t(\x,v))\ dt = \int_0^{\tau(\x,v)} (\tau^k F) (\varphi_t(\x,v))\ dt, \qquad (\x,v)\in \partial_+ SM. 
\end{align*}
It is proved in \cite{Sharafudtinov1994} in the Euclidean case that the moments of orders $0\le k\le m$ determine an $m$-tensor field, i.e. the case where $F = \ell_m f$ for some $f\in C^\infty(S^m (T^*M))$. A reconstruction algorithm and Reshetnyak stability estimates for moment ray transforms has been derived in a recent work  as well \cite{KMSS}. We show that unique determination of a tensor field from its moment  ray transform generalizes to Riemannian settings, and in the presence of attenuation coefficients. 



\begin{theorem} \label{thm:moments} Let $(M,g)$ a non-trapping surface, $a\in C^\infty(M)$ and suppose that $I_{a,k}$ is s-injective for all $0\le k\le m$. Then for any $m$-tensor field $f\in C^\infty(S^m (T^*M))$, the collection of moment ray transforms 
    \begin{align*}
	I_{a,m} [\tau^k \ell_m f], \qquad 0\le k\le m	
    \end{align*}
    determines $f$ uniquely and explicitly.
\end{theorem}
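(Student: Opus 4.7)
The plan is to prove Theorem \ref{thm:moments} by induction on $m$. Since the decomposition of $\ell_m f$ naturally produces a gauge term whose Fourier content corresponds to a \emph{pair} of tensors of orders $m-1$ and $m-2$, it is convenient to state the induction at the level of functions of degree $m$ on $SM$: if $F$ is of degree $m$ on $SM$, then the $m+1$ moments $\{I_a[\tau^k F] : 0 \le k \le m\}$ determine $F$ uniquely and explicitly. Theorem \ref{thm:moments} then corresponds to $F = \ell_m f$. The base case $m=0$ is immediate from the assumed s-injectivity of $I_{a,0}$.

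For the inductive step, one first exploits the $0$-th moment $I_a F$. By Theorem \ref{thm:ATT} (whose explicit reconstruction is available under the assumed s-injectivity of $I_{a,m}$), there is a unique decomposition
\[
F = (X+a) p + h,
\]
where $p$ has degree $m-1$ with $p|_{\partial SM} = 0$ and $h$ is an ``s-injective representative'' which is explicitly reconstructible from $I_a F = I_a h$. This reduces the task to recovering $p$ from the higher moments.

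The key identity, coming from $X\tau = -1$, is
\[
\tau^k (X+a) p = (X+a)(\tau^k p) + k\, \tau^{k-1} p, \qquad k \ge 1.
\]
Since $(\tau^k p)|_{\partial SM} = 0$ for $k \ge 1$ (using $p|_{\partial SM} = 0$), a direct integration by parts along the geodesic flow gives $I_a[(X+a)(\tau^k p)] = 0$. Applying $I_a$ to both sides of the identity therefore yields
\[
I_a[\tau^k (X+a) p] = k\, I_a[\tau^{k-1} p], \qquad 1 \le k \le m.
\]
Combined with $I_a[\tau^k F] = I_a[\tau^k (X+a) p] + I_a[\tau^k h]$ and the fact that $h$ is known from the first step, this delivers the $m$ moments $\{I_a[\tau^j p] : 0 \le j \le m-1\}$ of $p$. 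Since $p$ is of degree $m-1$, the inductive hypothesis then determines $p$ uniquely and explicitly, and combining with $h$ reconstructs $F$, hence $f$.

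The main obstacle is the first step: under the hypotheses of non-trapping plus s-injectivity of $I_{a,k}$ for $0\le k \le m$ (as opposed to the simply connected setting of Theorem \ref{thm:decomposition}), one must ensure a genuinely explicit reconstruction of $h$ from $I_a F$, along with the existence of a $(X+a)$-preimage $p$ of $F-h$ vanishing on $\partial SM$. Once this decomposition is in place, the remainder of the argument is an algebraic iteration of the $X\tau=-1$ identity and the inductive hypothesis; the explicit character of the final formula is inherited from the explicit character of the $0$-th moment inversion at each degree.
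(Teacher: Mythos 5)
Your proposal is correct and follows essentially the same route as the paper: induction on the order, gauge decomposition $F=(X+a)p+h$ via Theorems \ref{thm:decomposition} and \ref{thm:ATT}, the identity $\tau^k(X+a)p=(X+a)(\tau^k p)+k\,\tau^{k-1}p$ coming from $X\tau=-1$, and recovery of the moments of $p$ to invoke the inductive hypothesis. Your choice to phrase the induction over functions of degree $m$ on $SM$ rather than over $m$-tensor fields is a minor (and arguably cleaner) repackaging of the same argument, since the gauge term $p$ is precisely such a function.
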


\subsection{Transverse transforms} 

The second class of vector-valued transforms is a collection of {\em transverse ray transforms}, previously appearing in \cite{Sharafudtinov1994,Hammer2004,Palamodov2015,Svetov2013} as mentioned in the Introduction. In what follows, we assume $(M,g)$ oriented\footnote{This is always true since our resting assumption is that $(M,g)$ be nontrapping.}, giving rise to an operator $v\mapsto v^\perp$ of direct rotation by $\pi/2$. Similar to the operator $\ell_m$ given in \eqref{eq:lm}, we define the family of operators $\ell_{m,k}\colon C^\infty (S^m(T^*M))\to C^\infty (SM)$ by
\begin{align}
    \ell_{m,k} f (x,v) = f_x (v^{m-k}\otimes (v^\perp)^k), \qquad 0\le k\le m \qquad (\ell_m \equiv \ell_{m,0}).
    \label{eq:lmk}
\end{align}

Each of these identifications can then define a tranverse ray transform by means of integration, and we provide conditions under which such a collection gives an injective ray transform over tensor fields. The proof relies on an algebraic reduction to the injectivity of scalar transforms defined over symmetric differentials, previously studied in \cite{Monard2013a}. In the non-trapping case where global isothermal coordinates $(x,\theta)$ exist on $M$, studying the ray transform $I_w$ over $k$-differentials is equivalent to the transform $L^2(M)\ni h\mapsto I_w [h e^{ik\theta}]$.

\begin{theorem}\label{thm:transverse}
    Fix $m$ any natural integer. Suppose $(M,g)$ is a Riemannian surface with boundary, and let $w:SM\to \Cm$ a weight such that for any $k$ with $m-k$ even and $|k|\le m$, the scalar transform $L^2(M)\ni h\mapsto I_w [h e^{ik\theta}]$ is injective. Then for any $f\in C^\infty(S^m(T^*M))$, the collection of transverse ray transforms 
    \begin{align}
	I_{w,m} [\ell_{m,k}f], \qquad 0\le k\le m,
	\label{eq:transverse}
    \end{align}
    determines $f$ uniquely and explicitly.
\end{theorem}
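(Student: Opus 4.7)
\noindent\emph{Proof plan.}
The plan is to reduce the vector-valued inversion to $m+1$ scalar inversion problems, one per Fourier frequency on the circle fibers of $SM$, by exploiting the algebra of $v\mapsto v^\perp$ in isothermal coordinates. I work in the global isothermal setting implicit in the theorem statement, with fiber angle $\theta$ on $SM$, and introduce a complex coframe $\omega,\bar\omega$ on $M$ dual to $E := E_1 + iE_2$ formed from the local orthonormal frame $(E_1,E_2)$. Any symmetric $m$-tensor then decomposes uniquely as
\begin{align*}
    f = \sum_{a+b=m} f_{a,b}\,\omega^a\bar\omega^b, \qquad \overline{f_{a,b}} = f_{b,a}.
\end{align*}
A direct computation gives $\omega(v) = \tfrac{1}{2}e^{-i\theta}$, $\bar\omega(v) = \tfrac{1}{2}e^{i\theta}$, $\omega(v^\perp) = -\tfrac{i}{2}e^{-i\theta}$, $\bar\omega(v^\perp) = \tfrac{i}{2}e^{i\theta}$, and expanding the symmetric evaluation $\omega^a\bar\omega^{m-a}(v^{m-k},(v^\perp)^k)$ by grouping terms according to how many of the $k$ factors $v^\perp$ are absorbed into $\omega$-slots (a Vandermonde-type sum) produces, after collecting, a formula
\begin{align*}
    \binom{m}{k}\,\ell_{m,k} f(x,\theta) \;=\; i^k \sum_{a=0}^m c_{a,k}\, g_a(x)\, e^{i(m-2a)\theta},
\end{align*}
where $g_a := 2^{-m} f_{a,m-a}$ and $c_{a,k}$ is the coefficient of $x^k$ in the polynomial $Q_a(x) := (1-x)^a(1+x)^{m-a}$.

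The heart of the argument is to show that the $(m+1)\times(m+1)$ matrix $C = (c_{a,k})_{0\le k,a\le m}$ is invertible, equivalently that $\{Q_a\}_{a=0}^m$ spans all polynomials of degree at most $m$. This is immediate from the observation that $Q_a$ vanishes to order exactly $a$ at $x=1$ (since $(1+x)^{m-a}|_{x=1} = 2^{m-a}\neq 0$): the $Q_a$ are thus linearly independent, and being $m+1$ polynomials of degree $\le m$ they form a basis of that $(m+1)$-dimensional space. Applying $C^{-1}$ on the data side then extracts, from $\{I_{w,m}[\ell_{m,k}f]\}_{k=0}^m$, each individual scalar transform $I_w[g_a\, e^{i(m-2a)\theta}]$ for $0\le a\le m$.

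Since the Fourier frequencies $j := m-2a$ that arise here range exactly over the integers with $|j|\le m$ and $m-j$ even, the scalar injectivity hypothesis on $w$ applies to each of them and explicitly reconstructs $g_a$, hence $f_{a,m-a}$, and hence $f$. The main obstacle I anticipate is the symmetric-tensor bookkeeping that isolates the compact polynomial identity $Q_a(x) = (1-x)^a(1+x)^{m-a}$; once that algebraic structure is in hand, both the invertibility of $C$ and the reduction to the scalar injectivity hypothesis are immediate.
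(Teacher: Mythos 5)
Your proposal is correct and follows the same overall architecture as the paper's proof: decompose $f$ in a complex basis of symmetric products of $dz$ and $d\bar z$ (your $\omega^a\bar\omega^{m-a}$), observe that pairing with $v^{m-k}\otimes(v^\perp)^k$ turns each component into a fixed multiple of $f_{a,m-a}(x)\,e^{i(m-2a)\theta}$, so that the vector of transverse transforms is a constant $(m+1)\times(m+1)$ matrix applied to the vector of scalar transforms $I_w[g_a e^{i(m-2a)\theta}]$, and finish by inverting that matrix and invoking the scalar injectivity hypothesis for the frequencies $j=m-2a$, which are exactly those with $|j|\le m$ and $m-j$ even. Your combinatorial bookkeeping checks out: the Vandermonde-type sum over how many $v^\perp$ factors land in $\omega$-slots does produce, after the Krawtchouk reciprocity $\binom{m}{a}^{-1}[x^a]\{(1-x)^k(1+x)^{m-k}\}=\binom{m}{k}^{-1}[x^k]\{(1-x)^a(1+x)^{m-a}\}$, precisely your coefficients $c_{a,k}$.

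The one place where you genuinely diverge from the paper is the proof that the constant matrix is invertible. The paper recognizes its entries $A^{(m)}_{pq}$ as inner products, for the permanent inner product on $S^m(\mathbb{C}^2)$, between two orthogonal bases $\{\sigma(\binom{1}{0}^{m-p}\otimes\binom{0}{1}^p)\}$ and $\{\sigma(\binom{1}{i}^{m-q}\otimes\binom{1}{-i}^q)\}$; after normalizing by the computed norms $\binom{m}{p}^{1/2}$ and $2^{m/2}\binom{m}{q}^{1/2}$ the matrix becomes unitary, which yields the closed-form inverse $(A^{(m)})^{-1}_{pq}=2^{-m/2}\binom{m}{p}^{-1/2}\binom{m}{q}^{-1/2}\overline{A^{(m)}_{qp}}$. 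You instead read the rows of $C$ as the coefficient vectors of the polynomials $Q_a(x)=(1-x)^a(1+x)^{m-a}$ and deduce invertibility from their distinct orders of vanishing at $x=1$. Your argument is more elementary and gets invertibility essentially for free, but it does not by itself produce a formula for $C^{-1}$; since the theorem promises an \emph{explicit} determination, you should either note that a constant $(m+1)\times(m+1)$ matrix is always explicitly invertible in principle, or adopt the paper's unitarity trick, which hands you the inverse entries directly. Either way, the proof is complete and correct.
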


\paragraph{Summary of injective settings.} So far the injectivity results we have stated rely on the injectivity of other, simpler transforms. We now summarize on what surfaces Theorems \ref{thm:moments} and \ref{thm:transverse} translate into injective transforms over tensor fields.  

\begin{theorem} \label{thm:inj}
    Let $(M,g)$ a simply connected surface with boundary and suppose $w \equiv w_a$ for some attenuation $a\in C^\infty(M)$.  Then in either case below, for any $m\ge 0$, an $m$ tensor $f\in C^\infty(S^m(T^* M))$ is uniquely characterized by the collection of moment transforms $\{I_{a,m} [\tau^k \ell_{m}f]\}_{m=0}^k$, or the collection of transverse ray transforms $\{I_{a,m} [\ell_{m,k}f]\}_{m=0}^k$:

    $(i)$ $(M,g)$ is simple.

    $(ii)$ $(M,g)$ is a disk endowed with a radial metric satisfying Herglotz' non-trapping condition, and $a = 0$. 
\end{theorem}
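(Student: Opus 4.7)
The plan is to derive Theorem \ref{thm:inj} as a corollary of Theorems \ref{thm:moments} and \ref{thm:transverse} by verifying their hypotheses in each of the two geometric settings. Both of those theorems reduce the injectivity question to s-injectivity (or single-mode injectivity) of simpler scalar or tensor attenuated transforms, so the work consists in checking these building blocks.

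For the moment transform assertion, the hypothesis is s-injectivity of $I_{a,k}$ for $0\le k\le m$. In case (i), this is precisely Theorem \ref{thm:ATT} of the present paper, read via the unique decomposition in Theorem \ref{thm:decomposition}, which pins down the notion of solenoidal representative. In case (ii), with $a\equiv 0$, s-injectivity of the unattenuated ray transform over tensor fields of all orders on any Herglotz disk is due to Sharafutdinov \cite{Sharafudtinov1997}.

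For the transverse transform assertion, one needs injectivity of $h\mapsto I_{w_a}[he^{ik\theta}]$ for each prescribed $k$. On a simply connected surface, global isothermal coordinates exist, and $he^{ik\theta}$ is a function on $SM$ supported on the single Fourier mode $k$, hence of degree $|k|$ in the sense of Theorem \ref{thm:ATT}. In case (i), assuming $I_a(he^{ik\theta})=0$, Theorem \ref{thm:ATT} produces $p$ of degree $|k|-1$ with $p|_{\partial SM}=0$ such that $(X+a)p=he^{ik\theta}$. Splitting both sides into Fourier modes via the standard decomposition $X=\eta_++\eta_-$ yields a cascade of coupled equations on the components $p_j$; the extreme-mode equations $\eta_+p_{|k|-1}=h$ and $\eta_-p_{-(|k|-1)}=0$ admit a holomorphicity/antiholomorphicity interpretation, and combined with the homogeneous boundary condition and the maximum principle on the simply connected domain force the corresponding $p_j$ to vanish. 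The coupling induced by $a$ then propagates this vanishing inductively through the remaining modes, forcing $p\equiv 0$ and hence $h\equiv 0$. In case (ii), with $a=0$ and $w\equiv 1$, the integrand $he^{ik\theta}$ identifies with a $k$-differential and injectivity follows directly from the tensor tomography of \cite{Sharafudtinov1997}.

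The main technical point is the cascade argument in case (i) for the transverse transforms: one must show that the mode-by-mode system obtained from $(X+a)p=he^{ik\theta}$, together with $p|_{\partial SM}=0$ and the fact that the components $p_j$ vanish for $|j|\ge|k|$, admits only the trivial solution. This requires combining the holomorphicity information coming from the extreme Fourier modes with the simply-connected boundary vanishing, and then carefully tracking how the attenuation $a$ couples neighboring modes so that the vanishing of the outer components propagates all the way to the center of the Fourier spectrum of $p$.
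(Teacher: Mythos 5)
Your proposal is correct and follows essentially the same route as the paper: Theorem \ref{thm:inj} is obtained as a corollary of Theorems \ref{thm:moments} and \ref{thm:transverse}, with the hypotheses verified via Theorem \ref{thm:ATT} (through Theorem \ref{thm:decomposition}) in the simple case and via Sharafutdinov's result together with the $e^{ik\theta}$-reduction (Lemma \ref{lem:omegak}) in the Herglotz case. The only variation is that for the attenuated transverse transforms on a simple surface you re-derive the content of Lemma \ref{lem:omegakatt} by applying Theorem \ref{thm:ATT} directly to the single-mode function $he^{ik\theta}$ and running the $\eta_\pm$-cascade on the modes of $p$ --- a valid and slightly more self-contained treatment than the paper's citation of the unattenuated result of \cite{Monard2013a}, which strictly speaking only covers $a=0$.
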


\begin{proof}
    Since it was proved in \cite{Monard2013a} on a simple surface that for any $k\in \Zm$, the transform $L^2(M)\ni f\mapsto I[e^{ik\theta} f]$ is injective, $(i)$ follows directly from Theorem \ref{thm:transverse} for the transverse transforms. The case of moment transforms follows from Theorems \ref{thm:ATT} and \ref{thm:moments}.

    To prove $(ii)$, it was proved in \cite{Sharafudtinov1997} in this context that the geodesic X-ray transform is s-injective over $m$-tensors for any $m\ge 0$. Therefore the result for moment transforms follows immediately from Theorem \ref{thm:moments} while for transverse ray transforms, the result follows by using Lemma \ref{lem:omegak}.
\end{proof}

\paragraph{Outline.} The remainder of the paper is organized as follows. We recall some geometric preliminaries in Section \ref{sec:prelims}. Section \ref{sec:ATTP} deals with attenuated tensor tomography and provides a proof of Theorems \ref{thm:decomposition} and \ref{thm:ATT}. Section \ref{sec:moments} covers the proof of Theorem \ref{thm:moments} on moment transforms and Section \ref{sec:transverse} covers the proof of Theorem \ref{sec:transverse} on transverse ray transforms.


\section{Preliminaries} \label{sec:prelims}

First note that the non-trapping assumption implies that $M$ is simply connected, hence oriented. In particular, there exists a circle action on the fibres of the unit tangent bundle $SM$ which is generated by the vertical vector field $V$, and we will use the well-known canonical framing of $T(SM)$ given by $\{X,V, X_\perp := [X,V]\}$, where $[\cdot, \cdot]$ denotes the Lie bracket of vector fields. $SM$ is equipped with the Sasaki metric making $(X,X_\perp,V)$ orthonormal and the $L^2(SM)$ inner product 
\begin{align*}
    \langle u, v \rangle_{SM} = \int_{SM} u\ \overline{v}\ d\Sigma^3
\end{align*}
is defined with respect to its corresponding volume form. For $u \in C^\infty(SM)$ vanishing on $\partial_- SM$, we have the following integration by parts formula
\begin{align}
    \int_{SM} Xu\ d\Sigma^3 = - \int_{\partial_+ SM} u(x,v) \mu(x,v) d\Sigma^2, \qquad \mu(x,v) := |g_x(\nu(x), v)|.
    \label{eq:IBP}
\end{align}


We can decompose $L^2(SM)$ orthogonally by diagonalizing the vertical Laplacian $-V^2$, as the following direct sum:
\begin{align*}
L^2(SM) = \bigoplus_{k \ge 0} H_k, \qquad H_k := \ker (-V^2 - k^2 Id)\cap L^2(SM).
\end{align*}
In what follows, we may also denote $\Omega_k :=  H_k \cap C^\infty(SM)$. Following notation in \cite{Paternain2015}, for each $k>0$, the space $H_k$ decomposes into $H_k = E_k \oplus E_{-k}$, (resp. $\Omega_k = \Lambda_k \oplus \Lambda_{-k}$), where $E_{\pm k}$ (resp. $\Lambda_{\pm k}$) corresponds to $L^2$ (resp. smooth) sections of $\ker (-iV\pm k Id)$. 

We will decompose an element in $L^2(SM)$ (or $C^\infty(SM)$) using this orthogonal decomposition, as follows
\begin{align*}
    u  = \sum_{k = 0}^\infty  u_k, \qquad u_k \in H_k.
\end{align*}
In isothermal coordinates $(x, \theta)$, the component $u_k$ takes the form
\[ u_k(x, \theta) =u_{k,+}(x)e^{ik\theta}+ u_{k,-}(x)e^{-ik\theta}, \qquad u_{k, \pm} \in L^2(M). \]
In these coordinates, an element $u\in H_k$ can be written as $u(x,\theta) = \tilde u(x) e^{ik\theta}$ with $\tilde u\in L^2(M)$, so we will somewhat abuse notation by defining
\begin{align*}
    e^{ik\theta} F(M) = \{u\in H_k, \quad u = e^{ik\theta} \tilde u(x), \quad \tilde u \in F(M)\},
\end{align*}
for $F$ some function space (e.g., $W^{1,2}_0(M)$). Though the notation suggests isothermal coordinates, these spaces do not depend on the choice of smooth abelian differential (e.g., $e^{i\theta}$) whose powers are used to factor out the fiber dependence.  

In what follows, we will also use the splitting $X = \eta_+ + \eta_-$ first introduced by Guillemin and Kazhdan in \cite{Guillemin1980}, where
\begin{align*}
\eta_{\pm} = \frac{1}{2}\left(X\pm i X_\perp\right),
\end{align*}
with the property that $\eta_\pm (\Omega_k) \subset \Omega_{k\pm 1}$ for all $k\in \mathbb{Z}$. In what follows, we also denote
\begin{align*}
   \ker^k\  \eta_{\pm} := \Omega_k \cap \mbox{ ker } \eta_{\pm}, \qquad k\in \mathbb{Z}, 
\end{align*}
as well as 
\begin{align}\label{eq: definition kernel of eta}
    L^2(\ker^{\mp k} \eta_{\pm}) := \{ f \in L^2(SM): f_p =0 \mbox{ for } p \neq \mp k; \quad \eta_{\pm} f  =0\}.
\end{align}
As explained in \cite[Sec. 7.1]{Assylbekov2017}, such spaces are closed subspaces of $L^2(SM,\Cm)$ (or $H_k$), and as such are Hilbert spaces themselves, admitting complete orthonormal sets, which we denote $\{ \phi^{\mp k, j}\}_{j=0}^{\infty}$.

Of special interest will be, for $k \ge 1$, the spaces $\ker^k \eta_- \oplus \ker^{-k} \eta_+$. Via the identification $\ell_k$ in \eqref{eq:lm} for $k\ge 2$, such spaces correspond to trace-free, divergence-free tensors of order $k$.
The $L^2$ version will be denoted
\begin{align*}
    \Hksol := L^2(\ker^k \eta_-) \oplus L^2(\ker^{-k} \eta_+), \qquad k\ge 1.
\end{align*}


\section{Scalar attenuated transforms - Proof of Theorems \ref{thm:decomposition} and \ref{thm:ATT}} \label{sec:ATTP}

We first prove the decomposition Theorem \ref{thm:decomposition}, whose proof relies on the following lemma. 

\begin{lemma}\label{lem:decomp}
    Let $f_k \in H_k$ for $k \geq 2$. Then there exist $h_k \in \Hksol$, $g_{k-1} \in H_{k-1}$ with $g_{k-1, \pm} \in W^{1,2}_0(M)$, and $w_{k-2} \in H_{k-2}$, such that 
    \begin{align}\label{eq:decomposition of f_k}
	f_k = X g_{k-1}+w_{k-2} +h_k.
    \end{align} 
\end{lemma}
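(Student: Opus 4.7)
The plan is to use the orthogonal splitting $H_k = E_k \oplus E_{-k}$ and solve two conjugate problems, one per mode. Write $f_k = f_{k,+} + f_{k,-}$ with $f_{k,\pm} \in E_{\pm k}$, and likewise seek $g_{k-1} = g_{k-1,+} + g_{k-1,-}$, $w_{k-2} = w_{k-2,+} + w_{k-2,-}$, and $h_k = h_{k,+} + h_{k,-}$ in matching components. Using $X = \eta_+ + \eta_-$ together with the grading $\eta_\pm(E_\ell) \subset E_{\ell\pm 1}$, and expanding $Xg_{k-1} + w_{k-2} + h_k$ while matching Fourier modes in $E_k$ and $E_{k-2}$, one is forced to the identities
\[
  f_{k,+} = \eta_+ g_{k-1,+} + h_{k,+}, \qquad w_{k-2,+} = -\eta_- g_{k-1,+},
\]
together with the solenoidal constraint $h_{k,+} \in \ker^k \eta_-$. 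Once a suitable $g_{k-1,+}$ is produced (with the coefficient $\tilde g_{k-1,+} \in W^{1,2}_0(M)$), these two identities explicitly define $h_{k,+}$ and $w_{k-2,+}$, and a symmetric argument handles the $-$ mode using $\eta_+ \eta_-$ in place of $\eta_-\eta_+$.

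The core of the proof therefore reduces to solving the boundary value problem: find $g_{k-1,+}\in E_{k-1}$, with coefficient vanishing on $\partial M$, such that
\[
  \eta_- \eta_+ g_{k-1,+} = \eta_- f_{k,+}.
\]
Under the identification $E_{k-1}\simeq e^{i(k-1)\theta} L^2(M)$, the operator $\eta_-\eta_+$ descends to a second-order elliptic operator on $M$ whose principal symbol agrees, up to a constant, with that of $\Delta_g$, since $(X,X_\perp)$ is an orthonormal frame of $\ker V \subset T(SM)$. Moreover, the integration-by-parts formula \eqref{eq:IBP} applied to $\eta_+ u \cdot \bar u$ yields $\eta_+^* = -\eta_-$ on the class of elements whose underlying coefficient lies in $W^{1,2}_0(M)$, so that the bilinear form $u\mapsto \|\eta_+ u\|^2_{L^2(SM)}$ is coercive on $e^{i(k-1)\theta}W^{1,2}_0(M)$. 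Lax–Milgram then produces a unique weak solution, and elliptic regularity up to the boundary upgrades smoothness when $f_k$ is smooth.

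The main technical obstacle is this reduction to a well-posed Dirichlet problem on $M$: specifically, verifying ellipticity and coercivity of $\eta_-\eta_+$ on the relevant Sobolev class of $E_{k-1}$, handling carefully the gauge introduced by the factor $e^{i(k-1)\theta}$ and the boundary terms produced by \eqref{eq:IBP}. Once the Dirichlet problem is solved and $g_{k-1,+}, h_{k,+}, w_{k-2,+}$ (together with their $-$-mode conjugates) are defined as above, the decomposition $f_k = Xg_{k-1} + w_{k-2} + h_k$ follows by direct verification: the intermediate contributions of $\eta_- g_{k-1,+}$ coming from $Xg_{k-1}$ cancel with $w_{k-2,+}$ in $E_{k-2}$, symmetrically in $E_{-(k-2)}$, and the $E_{\pm k}$ components collapse by construction to $f_{k,\pm}$.
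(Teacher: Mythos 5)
Your proposal is correct and follows essentially the same route as the paper: split $f_k = f_{k,+}+f_{k,-}$, apply the one-sided elliptic decompositions $f_{k,\pm} = \eta_\pm g_{k-1,\pm} + h_{k,\pm}$ with $h_{k,\pm}\in\ker^{\pm k}\eta_\mp$, and absorb the leftover terms $-\eta_\mp g_{k-1,\pm}\in E_{\pm(k-2)}$ into $w_{k-2}$ after rewriting $\eta_+g_{k-1,+}+\eta_-g_{k-1,-}=Xg_{k-1}-\eta_-g_{k-1,+}-\eta_+g_{k-1,-}$. The only difference is that the paper simply quotes these decompositions (display \eqref{eq:elliptic}) as known, whereas you also sketch their proof via the Dirichlet problem for $\eta_-\eta_+$; that sketch is sound, with the minor caveat that coercivity of $u\mapsto\|\eta_+u\|^2$ on $e^{i(k-1)\theta}W^{1,2}_0(M)$ does not follow from $\eta_+^*=-\eta_-$ alone but needs the first-order ellipticity of $\eta_+$ together with the triviality of $\ker\eta_+\cap W^{1,2}_0$ (a compactness or unique-continuation step).
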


To prove the lemma, recall the following two elliptic decompositions: for any $k \in \Zm$, any $f_k \in E_k$ can be uniquely written in the following two ways: 
\begin{align}
    \begin{split}
	f_k &= \eta_+ g_{k-1} + h_k, \qquad g_{k-1} \in e^{i(k-1)\theta} W^{1,2}_0(M), \qquad h_k \in L^2(\ker^k \eta_-), \\
	f_k &= \eta_- g'_{k+1} + h'_k, \qquad g'_{k+1} \in e^{i(k+1)\theta} W^{1,2}_0(M), \qquad h'_k \in L^2(\ker^k \eta_+).
    \end{split}
    \label{eq:elliptic}
\end{align}

\begin{proof}[Proof of Lemma \ref{lem:decomp}] Let $f_k  = f_{k,+} + f_{k,-}$. Using \eqref{eq:elliptic}, we can write
    \begin{align*}
	f_{k,+} &= \eta_+ g_{k-1, +} + h_{k, +}, \quad h_{k, +}\in L^2(\ker^k \eta_-), \quad g_{k-1,+} \in W^{1,2}_0(M), \\
	\text{and} \qquad f_{k,-} &= \eta_- g_{k-1, -} + h_{k, -}, \quad h_{k, -}\in L^2(\ker^{-k} \eta_+), \quad g_{k-1,-} \in W^{1,2}_0(M).
    \end{align*}
    Define $h_k := h_{k,+}+ h_{k,-}$ and $g_{k-1} :=g_{k-1,+} + g_{k-1,-}$. Using these we rewrite $f_k$ as follows:
    \begin{align*}
	f_k &= \eta_+ g_{k-1, +} +\eta_- g_{k-1, -} + h_k\\
	&=  X g_{k-1} - \eta_- g_{k-1, +} -\eta_+ g_{k-1, -} + h_k,
    \end{align*}
    and the proof follows upon setting $w_{k-2} := - \eta_- g_{k-1, +} -\eta_+ g_{k-1, -}$.
\end{proof}

\begin{proof}[Proof of Theorem \ref{thm:decomposition}.]
    The proof uses induction on $m$ and Lemma \ref{lem:decomp}. The case $m=0$ follows trivially by taking $h_0 = f_0$. For the case $m=1$, we start by writing as $f = f_0 + f_1$ and decompose $f_1$ according to \eqref{eq:elliptic}:
    \begin{align*}
	f_{1,+} &=  \eta_+ g_{0,+} + h_{1,+}, \quad h_{1, +}\in L^2(\ker^1 \eta_-), \quad g_{0,+} \in W^{1,2}_0(M), \\
	\text{and} \qquad f_{1,-} &=  \eta_- g_{0,-} + h_{1,-}, \quad h_{1, -}\in L^2(\ker^{-1} \eta_+), \quad g_{0,-} \in W^{1,2}_0(M).    
    \end{align*}
    Using the identities $\eta_{\pm} = (X\pm i X_{\perp})/2$, we see 
    \begin{align*}
	\eta_+ g_{0,+} + \eta_- g_{0, -} = X\underbrace{ \left(\frac{g_{0,+} +g_{0,-}}{2}\right)}_{p} + X_{\perp}\underbrace{\left(i \frac{g_{0,+} -g_{0,-}}{2}\right)}_{h_\perp}.
    \end{align*}
    From this we rewrite $f$ as 
    \begin{align*}
	f &= f_0 + X p + X_\perp h_\perp  + h_1\\
	&= (X +a)p +\underbrace{(f_0 - a p)}_{h_0}+ X_\perp h_\perp  + h_1.
    \end{align*}
    with $h_0\in L^2(M)$, $h_\perp \in W^{1,2}_0(M)$ and $h_1 \in H_1^\text{sol}(M)$. Hence the result is true for $m=1$ also.
    \par We now show the induction step $(k \implies k+1)$. Let $f\in L^2(S^{k+1}(T^*M))$ and write $ f  = f_{\leq k} + f_{k+1}$ with $f_{\leq k} \in L^2(S^{k}(T^*M))$. By Lemma \ref{lem:decomp}, $f_{k+1}$ decomposes into
    \begin{align*}
	f_{k+1} = X g_{k}+w_{k-1} +h_{k+1}
    \end{align*}
    where $h_{k+1} \in H_{k+1}^{\text{sol}}(M)$, $g_{k} \in e^{ik\theta}W^{1,2}_0(M) \oplus e^{-ik\theta} W^{1,2}_0(M)$ and $w_{k-1} \in H_{k-1}$. Using this we rewrite $f$ as
    \begin{align*}
	f &= f_{\leq k} + X g_{k}+w_{k-1} +h_{k+1}\\
	&= (X+a)g_{k} +\underbrace{ w_{k-1} - a g_{k} +f_{\leq k}}_{\tilde{f}} +h_{k+1}.
    \end{align*} 
    Then we can use our induction hypothesis to decompose $\tilde{f}\in L^2(S^{k}(T^*M))$  as 
    \begin{align*}
	\tilde{f} = (X+a)\tilde{p} + h_0 + X_\perp h_\perp + \sum_{j=1}^{k} h_j.
    \end{align*}  
    Finally we put it back to $f$ and get the following:
    \begin{align*}
	f &= (X+a)g_{k} +(X+a)\tilde{p} + h_0 + X_\perp h_\perp + \sum_{j=1}^{k} h_j +h_{k+1}\\
	&= (X+a)p + h_0 + X_\perp h_\perp + \sum_{j=1}^{k+1} h_j, \quad \mbox{ where } p := \tilde{p}+ g_{k}.
    \end{align*}
    Theorem \ref{thm:decomposition} is proved.
\end{proof}

We move on to the proof of Theorem \ref{thm:ATT}. The following Lemma was first proved in \cite[Lemma 7.2]{Assylbekov2017} for the case $k=1$, and we now generalize it to arbitrary $k$.  
Here and below, we denote by $\mathcal{O}_{\geq (k+1)}$ an element of $\bigoplus_{p\ge k+1} \Lambda_p$, similarly for $\mathcal{O}_{\le (k+1)}$. 

\begin{lemma}\label{lem:phi}
    \begin{itemize}
	\item[(a)] For every $\phi \in$ ker$^k \eta_-$, there exists $\psi =  \phi + \mathcal{O}_{\geq (k+1)}$ solution of $X\psi - \bar{a}\psi =0$.
	\item[(b)] For every $\phi \in$ ker$^{-k} \eta_+$, there exists $\psi =  \phi + \mathcal{O}_{\leq -(k+1)}$ solution of $X\psi - \bar{a}\psi =0$.
    \end{itemize}
\end{lemma}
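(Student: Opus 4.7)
The plan is to first reduce (b) to (a). Applying (a) with attenuation $a$ in place of $\bar a$ to $\bar\phi \in \ker^k \eta_-$ (note $\overline{\eta_+ \phi} = \eta_- \bar\phi$, and complex conjugation flips fiber Fourier modes) and then complex-conjugating the resulting $\tilde\psi$ flips the Fourier spread from $\ge k+1$ to $\le -(k+1)$ and converts $X\tilde\psi = a\tilde\psi$ into $X\overline{\tilde\psi} = \bar a\,\overline{\tilde\psi}$, giving exactly the conclusion of (b). So I focus on (a), which I would prove for any smooth complex-valued attenuation in place of $\bar a$.

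For (a), the idea is to build $\psi$ as a product $\psi = w\,q$, where $w = 1 + w_1 + w_2 + \cdots \in \bigoplus_{j\ge 0}\Lambda_j$ is a fiberwise holomorphic integrating factor for the attenuation $\bar a$, i.e.\ $Xw = \bar a\,w$ with $w_0 \equiv 1$, and $q = \phi + q_{k+1} + q_{k+2} + \cdots \in \bigoplus_{j\ge k}\Lambda_j$ is a fiberwise holomorphic first integral of $X$ with prescribed leading mode $\phi$, i.e.\ $Xq = 0$ and $q_k = \phi$. Once both are in hand, the Leibniz property of the derivation $X$ yields
\[
X(w\,q) \;=\; (Xw)\,q + w\,(Xq) \;=\; \bar a\,(w\,q),
\]
while the leading fiber Fourier mode of $w\,q$ is $w_0 q_k = \phi$. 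Hence $\psi := w\,q$ has the form $\phi + \mathcal{O}_{\ge (k+1)}$ and satisfies $X\psi - \bar a\psi = 0$, proving (a).

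The existence of the integrating factor $w$ on a simple surface is classical and I would import it from \cite{Salo2011,Paternain2015,Assylbekov2017}. The existence of $q$ is the main point and would be proved by an analogous recursive construction: expanding $Xq = 0$ mode-by-mode and using $\eta_-\phi = 0$, the degree-$k$ component gives $\eta_- q_{k+1} = 0$ (trivially solvable by setting $q_{k+1} := 0$), and the remaining components give the cascade $\eta_- q_{j+1} = -\eta_+ q_{j-1}$ for all $j \ge k+1$. Each step is solved by invoking the surjectivity of $\eta_-$ between the appropriate fiberwise Fourier spaces on a simple surface---the very same ingredient underpinning the construction of $w$. The main technical obstacle is therefore to adapt the solvability/regularity argument for $\eta_-$ from the base case $k=0$ (where the leading mode is the constant $1$) to arbitrary starting mode $k\ge 1$ with leading term $\phi \in \ker^k\eta_-$, propagating smoothness at every step; on a simple surface this is standard and requires no new ideas beyond those of \cite{Paternain2015,Assylbekov2017}.
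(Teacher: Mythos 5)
Your overall strategy is exactly the paper's: write $\psi = w\,q$ with $w = 1 + \mathcal{O}_{\geq 1}$ a fiberwise holomorphic solution of $Xw = \bar a w$ (the paper takes $w = e^{w'}$ with $Xw' = \bar a$, $w'$ holomorphic, from \cite[Prop.~4.1]{Salo2011}) and $q = \phi + \mathcal{O}_{\geq (k+1)}$ a fiberwise holomorphic first integral with leading mode $\phi$, then use the Leibniz rule. Your reduction of (b) to (a) by complex conjugation is also fine (the paper simply runs the antiholomorphic argument in parallel).

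The one genuine gap is in your construction of $q$. Solving the cascade $\eta_- q_{k+1} = 0$, $\eta_- q_{j+1} = -\eta_+ q_{j-1}$ for $j \geq k+1$ one mode at a time only produces a \emph{formal} Fourier series $\sum_{j \geq k} q_j$ in the fiber variable: each step is solvable because $\eta_-$ is surjective on a simply connected surface, but the solutions are non-unique and come with no norm control, so nothing in your argument guarantees that the series converges to an element of $C^\infty(SM)$, or even of $L^2(SM)$. This summability is precisely the non-trivial content of the existence of smooth first integrals with prescribed Fourier mode on simple surfaces; it is not obtained by the cascade but by the machinery of invariant distributions and the surjectivity of $I_0^*$ (microlocal analysis of the normal operator). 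The paper avoids the issue entirely by citing \cite[Lemma~5.6]{Paternain2013a}, which hands you a smooth $v$ with $Xv = 0$ and $v_k = \phi$, after which one restricts attention to $\sum_{j \geq k} v_j$ using $\eta_- \phi = 0$. Your proof becomes complete if you replace the recursive construction of $q$ by that citation; as written, the claim that the convergence/regularity step is ``standard and requires no new ideas'' is exactly where the hard analysis is being swept under the rug.
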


\begin{proof}
    Let $ w $ and $\tilde{w}$ be solutions of $ Xw = X\tilde{w} = \bar{a}$ obtained from \cite[Proposition 4.1]{Salo2011} where $w$ is holomorphic and $\tilde{w}$ is antiholomorphic. Using these solutions we get holomorphic solution $e^w = 1 + \mathcal{O}_{\geq 1}$ of  $(X- \bar{a}) w =0$ and antihomorphic solution $e^{\tilde{w}} = 1 + \mathcal{O}_{\leq 1}$ of  $(X- \bar{a}) \tilde{w} =0$.
    \par If $\phi \in \ker^k \eta_-$ then there exists a smooth solution $ v $ from \cite[Lemma 5.6]{Paternain2013a} of $Xv =0$ such that $ v_k = \phi$. Now $\eta_- v_k = \eta_- \phi =0$ implies that $\tilde{v} = \sum_{j\geq k} v_j$ is also  satisfies $X \tilde{v} =0$. Finally define $\psi  = e^w \tilde{v}$ to complete the proof of $(a)$. 
    \par If $\phi \in \ker^{-k} \eta_+$ then there exists a smooth solution $ v $ from \cite[Lemma 5.6]{Paternain2013a} of $Xv =0$ such that $ v_{-k} = \phi$. Now $\eta_+ v_{-k} = \eta_+ \phi =0$ implies that $\tilde{v} = \sum_{j\leq -k} v_j$ is also  satisfies $X \tilde{v} =0$. Finally define $\psi  = e^{\tilde{w}} \tilde{v}$ to complete the proof of $(b)$.
\end{proof}

We are now ready to prove Theorem \ref{thm:ATT}. 

\begin{proof}[Proof of Theorem \ref{thm:ATT}] In light of the decomposition Theorem \ref{thm:decomposition}, it is enough to show how to reconstruct $h$ from $I_a f = I_a h$.  
    
    We are going to show that $h_k$ for $2 \leq k \leq m$ can be reconstructed. As a first step we will reconstruct $h_m$. Let us write $$h_m = h_{m,+} + h_{m,-}$$ 
    where $h_{m,+} \in L^2(\mbox{ker}^m \eta_{-})$ and $h_{m,-} \in L^2(\mbox{ker}^{-m} \eta_{+})$. We will prove how to reconstruct $h_{m,+} $ from the knowledge of $I_a h$ and the proof for the reconstruction of $h_{m,-}$ is similar.
    Since $h_{m,+} \in L^2(\ker^m \eta_{-})$, it can be expressed as follows
    \begin{align*}
	h_{m,+} =  \sum_{j=0}^\infty \langle h_{m,+}, \phi^{+ m, j} \rangle_{SM}\ \phi^{+ m, j}.
    \end{align*}
    We use Lemma \ref{lem:phi}.$(a)$ to construct $\psi^{+m, j}$ corresponding to each  $\phi^{+ m, j}$. Take the inner product of the equation 
    $$ (X + a) u =- (h_0 + X_\perp h_s + \sum_{j=1}^m h_j) $$ 
    with  $\psi^{+m, j}$ to get 
    \begin{align}
	\langle (X + a) u, \psi^{+m, j}\rangle_{SM} &=  - \langle (h_0 + X_\perp h_s + \sum_{j=1}^m h_j), \psi^{+m, j}\rangle_{SM}.
	\label{eq:tmp}
    \end{align}
    Upon using integrating by parts \eqref{eq:IBP} and the fact that $u|_{\partial_+ SM} = I_a h$, the left-hand side of \eqref{eq:tmp} becomes 
    \begin{align*}
	\int_{\partial_+ SM} I_a h\  \overline{\psi^{+m, j}_{\partial_+ SM}}\ \mu\ d\Sigma^2 - \langle u, (-X + \bar{a})\psi^{+m, j}\rangle_{SM} = \int_{\partial_+ SM} I_a h\  \overline{\psi^{+m, j}_{\partial_+ SM}}\ \mu\ d\Sigma^2 
    \end{align*}
    while by consideration of harmonic content, the right-hand side of \eqref{eq:tmp} reduces to $-\langle h_{m, +} , \phi^{+ m, j}\rangle_{SM}$. This is because the first term in the inner product is $\O_{\le (m)}$ while the second is $\O_{\ge (m)}$. We can then derive that $h_{m,+}$ can be reconstructed via
    \begin{align*}
	h_{m,+} =  \sum_{j=0}^\infty \left( \int_{\partial_+ SM} I_a h\  \overline{\psi^{+m, j}_{\partial_+ SM}}\ \mu\ d\Sigma^2 \right)\ \phi^{+ m, j}.	
    \end{align*}
    Using similar analysis we can reconstruct $h_{m,-}$ from the knowledge of $I_a h$ and hence $h_m$ is reconstructed. Using the knowledge of $h_m$ we also know $I_a (h - h_m)$. If we denote $\tilde{h}  = h - h_m$  then again we can use the same procedure to compute $\tilde{h}_{m-1} =h_{m-1}$ from the knowledge of $I_a(\tilde{h})$. Repeating this process $(m-4)$- more times we can reconstruct $h_{m}$, $h_{m-1}$ down to $h_2$.
    
    Since $h_k$ for $2\le k\le m$ have been reconstructed, it remains to reconstruct $h_0, h_\perp, h_{-1}, h_{1}$ from $I_a [h_0 + X_\perp h_\perp + h_{-1} + h_1]$, and this is done explicitly in \cite[Sec. 7]{Assylbekov2017}. Theorem \ref{thm:ATT} is proved. 
\end{proof}

As suggested by an anonymous referee, a second proof of Theorem \ref{thm:ATT} can be written, making direct use of a holomorphic integrating factor for the attenuation function $a$. While not amenable to inversion techniques, the proof is direct and we record it here. 

\begin{proof}[Second proof of Theorem \ref{thm:ATT}] Suppose $I_a f =0$. Then the function $u$ solving $Xu + au = -f$ on $SM$ with boundary condition $u|_{\partial_- SM}=0$ also vanishes on $\partial_+ SM$, and the theorem is proved if we can show that $u_k = 0$ for all $k\ge m$. With $w$ a fiberwise holomorphic, smooth function such that $Xw = -a$, the function $e^{-w}u$ satisfies $X(e^{-w} u) = - e^{-w}f$ and vanishes on $\partial SM$. Since $(e^{-w}f)_{k,-} = 0$ for all $k\ge m$, this implies, by e.g. \cite[Prop. 4.2]{Paternain2011a}, that $(e^{-w}u)_{k,-} = 0$ for all $k\ge m-1$. In particular, since $e^{w}$ is fiberwise holomorphic, multiplying $e^{-w}u$ by $e^w$ will preserve this property, and we then obtain that $u_{k,-} = 0$ for all $k\ge m-1$. Using an antiholomorphic integrating factor together with \cite[Prop. 4.1]{Paternain2011a} will also yield $u_{k,+} = 0$ for all $k\ge m-1$. Hence the result.    
\end{proof}


\section{Moment transforms - proof of Theorem \ref{thm:moments}} \label{sec:moments}

\begin{proof}[Proof of Theorem \ref{thm:moments}] We prove this by induction. The case $m=0$ is just the injectivity of $I_{a,0}$. Now suppose the statement holds for some $m$, let $f$ an $m+1$ tensor and consider the reconstruction of $f$ from $I_{a,m}[\tau^k f]$ for $0\le k\le m+1$. Following Theorem \ref{thm:decomposition}, we write $f = (X+a)v + g$ with $v$ of degree $m$ such that $v|_{\partial SM} =0$. Then $I_{a,m} f = I_{a,m} g$, and one may reconstruct $g$ from $I_{a,m}g$ by virtue of Theorem \ref{thm:ATT}. Then for each $0\le k\le m$, noticing the identity
    \begin{align*}
	\tau^{k+1} f = \tau^{k+1} (X+a)v + \tau^{k+1} g = (X+a) (\tau^{k+1} v) + (k+1) \tau^k v + \tau^{k+1} g
    \end{align*}
    we arrive at the relation
    \begin{align*}
	I_{a,m} [\tau^{k+1} f] = \underbrace{I_{a} [(X+a) (\tau^{k+1} v)]}_{=0} + (k+1) I_{a,m-1} [\tau^k v] + I_{a,m} [\tau^{k+1} g].
  \end{align*}
  In other words, the data 
  \begin{align*}
      I_{a,m-1} [\tau^k v] = \frac{1}{k+1} \left( I_{a,m} [\tau^{k+1} f] - I_{a,m} [\tau^{k+1} g] \right), \qquad 0\le k\le m,
  \end{align*}
  is known from the initial data, out of which we can reconstruct $v$ using the induction property. Theorem \ref{thm:moments} is proved.
\end{proof}

\begin{remark} As building block to the proof above, one must be able to find a gauge representative and reconstruct it from the usual longitudinal X-ray transform. We do this through Theorems \ref{thm:decomposition} and \ref{thm:ATT}. Another decomposition from the one in Theorem \ref{thm:decomposition} is the solenoidal-potential one, where the representative to be reconstructed is a solenoidal tensor field. If an efficient reconstruction procedure can be derived for the solenoidal representative, then this provides another approach to prove Theorem \ref{thm:moments} and reconstruct tensor fields from their moment transforms. 
\end{remark}


\section{Transverse ray transforms - proof of Theorem \ref{thm:transverse}} \label{sec:transverse}

\subsection{On the injectivity of the transforms $I_{w} [f(x) e^{ik\theta}]$}

The injectivity of the transforms $I_{w} [f(x) e^{ik\theta}]$ was proved in \cite[Theorem 3.2(i)]{Monard2013a} for simple surfaces whenever the longitudinal transform is $s$-injective over $m$-tensors for some $m\ge k$ with $m-k$ even. This fact alone only relies on topological restrictions on $M$, as is explained below. The main tool is the splitting $X = \eta_+ + \eta_-$ into the two elliptic operators $\eta_{\pm} := X\pm i X_\perp$ such that, over any space $\Omega_k$, the problem
\begin{align*}
    \eta_\pm u = 0 \qquad (SM), \qquad u|_{\partial SM} = 0,
\end{align*}
only admits the trivial solution $u\equiv 0$. We then state the following result, whose proof is identical to \cite[Theorem 3.2(i)]{Monard2013a}

\begin{lemma} \label{lem:omegak} 
    Let $(M,g)$ simply connected, and suppose the geodesic ray transform is s-injective over $m$-tensors. Then for any $k\le m$ with $m-k$ even, the transform $L^2(M) \ni f \mapsto I[e^{ik\theta}f]$ is injective.     
\end{lemma}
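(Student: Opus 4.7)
The plan is to reduce the injectivity question for $f\mapsto I[e^{ik\theta}f]$ to the $s$-injectivity hypothesis on the longitudinal transform over $m$-tensors. The idea is to realize $e^{ik\theta}f$ as (a scalar multiple of) the longitudinal restriction of an explicit $m$-tensor built from $f$, and then use the elliptic vanishing property of $\eta_\pm$ at each fiber harmonic to unwind the resulting gauge. I describe the case $k\ge 1$; the case $k=0$ needs only minor modifications and $k<0$ is symmetric, handled by a conjugate abelian differential.

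I first embed $fe^{ik\theta}$ into the $m$-tensor setting. Since $M$ is simply connected, fix a smooth one-form $\alpha$ with $\ell_1\alpha=e^{i\theta}$, and let $g$ denote the metric $2$-tensor, so $\ell_2 g=1$. Setting
\[F:=f\cdot g^{\odot(m-k)/2}\odot \alpha^{\odot k}\in C^\infty(S^m(T^*M)),\]
one computes $\ell_m F=C_{m,k}\,fe^{ik\theta}$ for a nonzero combinatorial constant. The vanishing $I[fe^{ik\theta}]=0$ becomes $I_m F=0$, and by hypothesis there exists an $(m-1)$-tensor $p$ with $p|_{\partial M}=0$ and $F=d^s p$. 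Passing to longitudinal restrictions through $\ell_m d^s p=X\ell_{m-1}p$, the function $q:=\ell_{m-1}p$ satisfies
\[Xq=C_{m,k}\,fe^{ik\theta},\qquad q|_{\partial SM}=0,\]
with $q$ supported in the fiber harmonics $j\in\{-(m-1),\dots,m-1\}$ of parity opposite to that of $k$ (since $m-k$ is even).

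Next I unwind $q$ harmonic-by-harmonic using $X=\eta_++\eta_-$. Projecting onto $\Omega_j$, the equation becomes
\[\eta_+ q_{j-1}+\eta_- q_{j+1}=C_{m,k}\,fe^{ik\theta}\,\delta_{jk}.\]
At $j=m$, $q_{m+1}=0$ by degree, so $\eta_+ q_{m-1}=0$; together with $q_{m-1}|_{\partial SM}=0$, the vanishing lemma for $\eta_+$ quoted in the preamble forces $q_{m-1}=0$. Iterating at $j=m-2,m-4,\dots,k+2$ eliminates $q_{m-3},\dots,q_{k+1}$; the symmetric sweep at $j=-m,-m+2,\dots,-k-2$, using the $\eta_-$-version of the vanishing lemma, kills $q_{-(m-1)},\dots,q_{-(k+1)}$. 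The projection at $j=-k$ then yields $\eta_- q_{-k+1}=0$, hence $q_{-k+1}=0$, and stepping upward by two units through $j=-k+2,\dots,k-2$ peels off the remaining harmonics $q_{-k+3},\dots,q_{k-1}$. Finally the $j=k$ projection collapses to $0=C_{m,k}\,fe^{ik\theta}$, so $f\equiv 0$.

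The only substantive analytic input is the elliptic vanishing lemma for $\eta_\pm$ under Dirichlet boundary data, already quoted in the text as a topological consequence of $M$ being simply connected; everything else is an algebraic cascade in the fiber Fourier modes. The one bookkeeping point that deserves care is verifying that $q=\ell_{m-1}p$ really has the claimed boundary trace and harmonic support to start the cascade, both of which follow immediately from $p|_{\partial M}=0$ together with the definition of $\ell_{m-1}$.
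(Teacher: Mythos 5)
Your proof is correct and follows essentially the same route as the paper: the paper defers the proof of Lemma \ref{lem:omegak} to \cite[Theorem 3.2(i)]{Monard2013a}, but the argument it displays for the attenuated analogue (Lemma \ref{lem:omegakatt}) is exactly your scheme --- realize $e^{ik\theta}f$ as the longitudinal restriction of an explicit $m$-tensor, invoke $s$-injectivity to produce a potential $p$ vanishing on $\partial M$, project $Xq = C\,fe^{ik\theta}$ onto fiber harmonics, and run the two-sided $\eta_\pm$ cascade using the triviality of $\ker \eta_\pm$ with Dirichlet data. Your bookkeeping of the harmonic parity of $q=\ell_{m-1}p$ (which is what makes the hypothesis ``$m-k$ even'' enter) and of the edge cases $k=m$ and $k=0$ is sound.
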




We now generalize the lemma above to a version for the attenuated ray transform. Since attenuation mixes odd and even Fourier modes, the condition ``$m-k$ even'' in the above statement is no longer in order, and the statement becomes: 

\begin{lemma} \label{lem:omegakatt} 
    Let $(M,g)$ simply connected, and suppose the transform $I_a$ is s-injective over $m$-tensors. Then for any $k$ with $|k|\le m$, the transform $L^2(M) \ni f \mapsto I_a[e^{ik\theta}f]$ is injective. 
\end{lemma}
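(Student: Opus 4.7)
The proof strategy is to combine the hypothesized s-injectivity of $I_a$ with a fiberwise Fourier-mode analysis, relying throughout on the same purely topological fact used for Lemma \ref{lem:omegak}: on a simply connected $M$, each operator $\eta_\pm$ restricted to a single fiber harmonic and subject to Dirichlet boundary conditions at $\partial SM$ has trivial kernel.

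Suppose $f\in L^2(M)$ satisfies $I_a[e^{ik\theta}f]=0$. Since $e^{ik\theta}f$ is a function of degree $|k|\le m$ on $SM$, in particular of degree $m$, the hypothesized s-injectivity (i.e., the conclusion of Theorem \ref{thm:ATT} applied at degree $m$) yields $p$ of degree $m-1$ with $p|_{\partial SM}=0$ satisfying $(X+a)p = e^{ik\theta}f$. Writing the integer fiber-Fourier decomposition $p=\sum_\ell p^{(\ell)}$ (so $p^{(\ell)}$ is the component of Fourier mode $\ell$, supported in $|\ell|\le m-1$), and using that $X=\eta_+ + \eta_-$ with $\eta_\pm$ shifting modes by $\pm 1$ while $a$ preserves modes, the equation $(X+a)p = e^{ik\theta}f$ reads mode-by-mode
\begin{equation*}
\eta_+ p^{(\ell-1)} + \eta_- p^{(\ell+1)} + a\,p^{(\ell)} = \delta_{\ell,k}\,e^{ik\theta}f, \qquad \ell\in \Zm.
\end{equation*}

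The next step is to force $p\equiv 0$ via two mode-sweeping recursions. Top-down: starting at $\ell=m$ (where $p^{(m)}=p^{(m+1)}=0$ and, assuming $k<m$, the RHS vanishes), Dirichlet-injectivity of $\eta_+$ forces $p^{(m-1)}=0$. Iterating at $\ell=j+1$ for decreasing $j$ with inductive hypothesis $p^{(j+1)}=p^{(j+2)}=0$, we obtain $\eta_+ p^{(j)}=\delta_{j+1,k}\,e^{ik\theta}f$, which yields $p^{(j)}=0$ whenever $j+1\ne k$; this sweeps $j\in[k,m-1]$. Symmetrically, a bottom-up recursion from $\ell=-m$ using $\eta_-$ in place of $\eta_+$ yields $p^{(j)}=0$ for $j\in[-(m-1),k]$. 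Together they cover every Fourier mode of $p$, so $p\equiv 0$. Substituting back into the mode-$k$ equation gives $0=e^{ik\theta}f$, whence $f=0$.

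The only delicate point is the extremal cases $k=\pm m$, where the recursion whose initial step carries nonzero RHS fails to start; however, the other recursion alone then sweeps the full Fourier support $[-(m-1),m-1]$ of $p$, so the conclusion remains uniform in $|k|\le m$. The main conceptual ingredient is the availability of a gauge decomposition $e^{ik\theta}f = (X+a)p$ from s-injectivity; all subsequent work is algebraic bookkeeping on Fourier modes, anchored by the elliptic injectivity of $\eta_\pm$ with Dirichlet data.
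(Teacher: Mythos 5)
Your proposal is correct and follows essentially the same route as the paper: invoke the hypothesized s-injectivity to write $e^{ik\theta}f=(X+a)p$ with $p$ of degree $m-1$ vanishing on $\partial SM$, expand mode-by-mode using $X=\eta_++\eta_-$, and sweep the Fourier modes from both ends using the Dirichlet-injectivity of $\eta_\pm$ on a simply connected surface to force $p\equiv 0$ and hence $f=0$. Your explicit treatment of the extremal cases $k=\pm m$ is a small point of added care over the paper's write-up, but the argument is the same.
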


\begin{proof}
    Let $f$ be such that $I_a[e^{ik\theta}f] = 0$ also assume $k\ge 0$ without loss of generality. We can think \tred{of} $e^{ik\theta}f$ as the restriction to $SM$ of the $m$-tensor field $e^{m\lambda} f(x) (dz^{m-k}\otimes d\zbar^{k})$, whose attenuated ray transform vanishes. Then $s$-injectivity of $I_a$ implies the existence of an $(m-1)$-tensor $v = \sum_{j=-(m-1)}^{m-1} v_j$ such that $(X+a)v =-f$ and $v_{j}|_{\partial M} =0$ for $j$ such that $|j|\le m-1$. 
Now for any  $ -m \leq j \leq m$ and $j \neq k$, we have 
    \begin{align*}
    \{(X+a)v\}_j = 0.
    \end{align*}
More explicitly we will get the following:
\begin{align*}
\eta_- v_{-(m-1)} &=0,\\
\eta_- v_{-(m-2)} + a v_{-(m-1)} &=0\\
\eta_+ v_{-(m-1)}+\eta_- v_{-(m-3)} + a v_{-(m-2)} &= 0,\\
& \ \ \vdots\\
\eta_+ v_{k-1}+\eta_- v_{k+1} + a v_k &= -f \\
& \ \ \vdots\\
\eta_+ v_{(m-2)} + a v_{(m-1)} &=0\\
\eta_+ v_{m-1} &=0.
\end{align*}
Augmented with homogeneous boundary conditions for each $v_k$, the first equation implies $v_{-(m-1)} = 0$, then the second $v_{-(m-2)} = 0$, down to the equation $\eta_+ v_{k-2} + \eta_- v_k + a v_{k-1} = 0$ which implies $v_k = 0$ (assuming $v_{k-1}= v_{k-2} = 0$ at this point). Similarly, by backward substitution we also have that $v_{m-1} = v_{m-2} = \dots = v_{k+1} = 0$. This implies $v\equiv 0$ and therefore $f = 0$. 
\end{proof}

\subsection{Proof of Theorem \ref{thm:transverse}}

The proof of Theorem \ref{thm:transverse} is mainly based on symmetric tensor algebra. To fix ideas, recall that given a 2-dimensional vector space $(V,\dprod{\cdot}{\cdot})$ with basis $\{v_1, v_2\}$, the space $S^m V$ of symmetric $m$-tensors is spanned by 
$$\{\sigma(v_1^{m-q}\otimes v_2^{k}): 0\leq q \leq m \}, $$
with $\sigma$ the symmetrization operator defined as follows:
$$ \sigma ( v_1 \otimes \dots \otimes v_m) = \frac{1}{m !}\sum_{\pi \in \mathfrak{S}_m} v_{\pi(1)} \otimes v_{\pi(2)} \otimes \cdots \otimes v_{\pi(m)},$$
where $\mathfrak{S}_m$ is the permutation group of order $m$. A natural inner product on $S^m V$ is given via the {\em permanent} of the Gram matrix, namely,
\begin{align*}
    \left\langle \sigma(v_1 \otimes \cdots \otimes v_m),\sigma(u_1 \otimes \cdots \otimes u_m) \right\rangle_{S^m V} &= \text{ per } \left( \{a_{ij} \}_{1\le i,j\le m}  \right) := \sum_{\pi \in  \mathfrak{S}_m} a_{ 1 \pi(1)} \cdots  a_{ m \pi(m)},
\end{align*}
where $a_{ij} := \langle u_i, v_j\rangle$, see \cite{Ra96}.



\begin{proof}[Proof of Theorem \ref{thm:transverse}] Without loss of generality, we work on a global chart of isothermal coordinates, where $g = e^{2\lambda} (dx^2 + dy^2)$, and where a unit tangent vector takes the form $v = e^{-\lambda} (\cos \theta \partial_x + \sin\theta \partial_y)$. A symmetric $m$-tensor $f$ decomposes in the local basis $T_{m,q}:= \sigma (dz^{m-q}\otimes d\bar{z}^q)$ with $0\le q\le m$, in the form 
    \begin{align*}
	f = \sum_{q=0}^m f_q(x) T_{m,q}, \qquad f_q\in L^2(M).
    \end{align*}
    In particular, we have 
    \begin{align*}
	\ell_{m,p} f(x,v) = \sum_{q=0}^m f_q(x) \dprod{T_{m,q}}{v^{m-p}\otimes (v^\perp)^p}.
    \end{align*}
    Now with $v$ expressed in isothermal coordinates, we have
    \begin{align*}
	\dprod{dz}{v} = e^{-\lambda} e^{i\theta}, \quad \dprod{dz}{v^\perp} = e^{-\lambda} i e^{i\theta}, \quad \dprod{d\zbar}{v} = e^{-\lambda} e^{-i\theta}, \quad \dprod{d\zbar}{v^\perp} = -i e^{-\lambda} e^{-i\theta}. 
    \end{align*}
    In particular, we can see that for any vector of the form $w = \alpha v + \beta v^\perp$, 
    \begin{align*}
	\dprod{dz}{w} = e^{-\lambda} e^{i\theta} \dprod{\binom{1}{i}}{\binom{\alpha}{\beta}}, \qquad \dprod{d\zbar}{w} =  e^{-\lambda} e^{-i\theta} \dprod{\binom{1}{-i}}{\binom{\alpha}{\beta}}.
    \end{align*}
    Then, using the definition directly, and using the notation $\zeta\mapsto \zeta'$ to denote the isomorphism mapping $v\mapsto \binom{1}{0}$ and $v^\perp\mapsto \binom{0}{1}$, we arrive at the fact that 
    \begin{align*} 
	\dprod{T_{m,q}}{v^{m-p}\otimes (v^\perp)^p} &= \left\langle dz^{m-q} \otimes d\zbar^{q}, \sigma(v_1\otimes v_2\otimes \cdots \otimes v_m )\right\rangle, \quad \text{where } (m-p)\  v_k\text{'s} \text{ are }v \\
	&= \frac{1}{m!} \sum_{\pi \in \mathfrak{S}_m}\prod_{k=1}^{m-q}\langle dz , v_{\pi(k)}\rangle \prod_{j=m-q+1}^m\langle
	d\zbar , v_{\pi(j)}\rangle\\
	&= \frac{e^{-m\lambda} e^{i(m-2q) \theta}}{m!} \sum_{\pi \in \mathfrak{S}_m}\prod_{k=1}^{m-q} \dprod{\binom{1}{i}}{v^\prime_{\pi(k)}} \prod_{j=m-q+1}^m\left\langle \binom{1}{-i}, v^\prime_{\pi(j)}\right\rangle\\
	&= e^{-m\lambda} e^{i(m-2q) \theta} \left\langle \binom{1}{i}^{m-q} \otimes \binom{1}{-i}^q, \sigma(v^{\prime}_{1}\otimes \cdots \otimes v^\prime_{m})\right\rangle\\
	&= e^{-m\lambda} e^{i(m-2q) \theta}\left\langle \sigma\left(\begin{pmatrix}
	    1\\i
	\end{pmatrix}^{m-q} \otimes \binom{1}{-i}^q\right), \begin{pmatrix}
	    1\\0
	\end{pmatrix}^{m-p} \otimes \begin{pmatrix}
	    0\\1
	\end{pmatrix}^p\right\rangle \\
	&= A^{(m)}_{pq} e^{-m\lambda} e^{i(m-2q)\theta}, 
    \end{align*}
    where the matrix $A^{(m)} \in M_{m+1}(\Cm)$ with elements
    \begin{align}
	A_{pq}^{(m)} := \left\langle \sigma\left(\begin{pmatrix}
	    1\\i
	\end{pmatrix}^{m-q} \otimes \binom{1}{-i}^q\right), \begin{pmatrix}
	    1\\0
	\end{pmatrix}^{m-p} \otimes \begin{pmatrix}
	    0\\1
	\end{pmatrix}^p\right\rangle, \qquad 0\le p,q\le m,
	\label{eq:Ampq}
    \end{align}
    is a constant matrix. In particular, this yields the relation
    \begin{align*}
	\ell_{m,p} f(x,v) = \sum_{q=0}^m A^{(m)}_{pq} e^{-m\lambda(x)}f_q(x) e^{i(m-2q)\theta},
    \end{align*}
    and hence 
    \begin{align*}
	\begin{pmatrix}
	    I_{w,m} [\ell_{m,0} f] \\
	    I_{w,m} [\ell_{m,1} f] \\	    
	    \vdots \\
	    I_{w,m} [\ell_{m,m} f]
	\end{pmatrix}
	= A^{(m)} 
	\begin{pmatrix}
	    I_w [e^{-m\lambda} f_0 e^{im\theta}] \\ I_w [e^{-m\lambda} f_1 e^{i(m-2)\theta}] \\ \vdots\\ I_w [e^{-m\lambda} f_m e^{-im\theta}].
	\end{pmatrix}
    \end{align*}
    Therefore, the theorem is proved if $A^{(m)}$ is invertible, and we now explain how to invert $A^{(m)}$ explicitly. 

    Since $\sigma$ is self-adjoint and idempotent, we may rewrite 
    \begin{align*}
	A_{pq}^{(m)} := \dprod{ \sigma\left( \binom{1}{0}^{m-p} \otimes \binom{0}{1}^p \right) }{  \sigma\left( \binom{1}{i}^{m-q} \otimes \binom{1}{-i}^q\right) }, \qquad 0\le p,q\le m,
    \end{align*}
    so $A^{(m)}$ is a matrix of inner products between two bases of $S^m (\Cm^2)$ (of complex dimension $m+1$) endowed with the {\em permanent} as inner product. Each is a basis because it's an orthogonal system, and moreover, we compute their norms by direct calculation
    \begin{align*}
	\dprod{ \sigma\left( \binom{1}{i}^{m-q}\otimes \binom{1}{-i}^q \right) }{ \sigma\left( \binom{1}{i}^{m-q}\otimes \binom{1}{-i}^q \right) } &= 2^m \binom{m}{q}, \\ 
	\dprod{ \sigma\left( \binom{1}{0}^{m-p}\otimes \binom{0}{1}^p \right) }{ \sigma\left( \binom{1}{0}^{m-p}\otimes \binom{0}{1}^p \right) } &= \binom{m}{p}.
    \end{align*}
    In particular, the matrix
    \begin{align*}
	B^{(m)} = \text{diag } \left( \binom{m}{p}^{-\frac{1}{2}},\ 0\le p\le m  \right) A^{(m)} \text{diag }\left( 2^{-m/2} \binom{m}{q}^{-\frac{1}{2}},\ 0\le q\le m \right)
    \end{align*}
    is unitary as it amounts to the inner products of two orthonormal bases so $(B^{(m)})^{-1} = (B^{(m)})^*$, and this directly yields an inverse for $A^{(m)}$, given by 
    \begin{align*}
	(A^{(m)})^{-1} = \text{diag }\left( 2^{-m/2} \binom{m}{q}^{-\frac{1}{2}},\ 0\le q\le m \right) (B^{(m)})^* \text{diag } \left( \binom{m}{p}^{-\frac{1}{2}},\ 0\le p\le m  \right),
    \end{align*}
    of general term 
    \begin{align*}
	(A^{(m)})_{pq}^{-1} = 2^{-m/2} \binom{m}{p}^{-\frac{1}{2}} \binom{m}{q}^{-\frac{1}{2}} \overline{A_{qp}^{(m)}}, \qquad 0\le p,q \le m. 
    \end{align*} 
\end{proof}

\section*{Acknowledgements}

V.P.K is partially supported by NSF grant DMS 1616564 and a SERB Matrics grant. F.M. is partially funded by NSF grant DMS-1814104 and a Hellmann Fellowship. The authors would like to thank Gunther Uhlmann and the Institute of Advanced Studies of the Hong Kong University of Science and Technology for their hospitality, as this work was initiated during the ``Inverse Problems, Imaging and Partial Differential Equations'' workshop organized there in December 2016.



\end{document}